\newtheorem{theorem}{Theorem}
\newtheorem{remark}{Remark}
\numberwithin{equation}{section}
\def\be{\begin{equation}}
\def\ee{\end{equation}}
\begin{document}
\title{Left to right maxima in Dyck Paths}

\author[A.~Blecher and A.~Knopfmacher]{Aubrey Blecher,  A.~Knopfmacher$^2{}$}\thanks{$^1{}$This material is based upon work supported by the National Research Foundation under grant number 86329 and 81021 respectively}
\address{A.~Blecher and A.~Knopfmacher\\
The John Knopfmacher Centre for Applicable Analysis and Number Theory\\
 School of Mathematics\\
University of the Witwatersrand, Private Bag 3, Wits 2050,
Johannesburg, South Africa}
\email{Aubrey.Blecher@wits.ac.za}
\email{Arnold.Knopfmacher@wits.ac.za}

\thanks{}
\date{\today}
\subjclass{Primary: 05A05, 05A15, 05A16; Secondary:}
\keywords{Dyck paths, generating functions, asymptotics, capacity}

\begin{abstract}
In a Dyck path a peak which is (weakly) higher than all the preceding peaks is called a strict (weak) left to right maximum.
We obtain explicit generating functions for both weak and strict left to right maxima in Dyck paths.
The proofs of the associated asymptotics make use of analytic techniques such as Mellin transforms, singularity analysis and formal residue calculus. 
\end{abstract}
\maketitle

\section{General introduction} 
A Dyck path is a lattice path in the first quadrant, that starts at the origin (0,0) with an up step ($u=(1,1)$) and thereafter only up and down ($d=(1,-1)$) steps are allowed under the conditions that it may not go below the $x$-axis and that it may terminate only if the end point is on the $x$-axis. A Dyck path with $n$ up steps must end at the point $(2n,0)$, see the definition in \cite{St}. Such a Dyck path is said to have length $2n$. For a detailed study of properties of Dyck paths see \cite{D2}. For further recent work on Dyck paths, see \cite{BBFGPW,BBK1,CFJ,CFJR,D1,MSTT,STT}.

Given an arbitrary Dyck path, we mean by a {\it{strict left to right maximum}}, any peak (successive pair of the form $ud$) in the Dyck path which is above all steps to its left. A {\it{weak left to right maximum}} is a peak which is greater than or equal to all peaks to its left.

A standard combinatorial problem is the accounting for the number of left to right maxima in combinatorial structures such as permutations and words over a fixed alphabet. In this paper we focus on obtaining a generating function for the number of left to right maxima in Dyck paths. This is a bivariate generating function which tracks the number of up steps by $z$ and the number of left to right maxima by $x$.
We also obtain a generating function for the total number of left to right maxima in Dyck paths with $n$ up steps.

As an introduction to the method we will use
for the construction of the first generating function above, here follows a sketch (Figure \ref{illustration}) of two Dyck paths of height $3$. The left to right maxima are marked in both cases by $A$ and $P$. $P$ also marks the first maximum height attained by the Dyck paths.  We begin at the origin with a $u$ step tracked in the generating function by $z$ which leaves us at the point $E$. This single up step is followed by a possibly empty upside-down Dyck path of maximum height $1$. In the left example in Figure \ref{illustration}, this part is indeed empty (and therefore not requiring $x$) but not in the second where the path between $E$ and $B$ is an upside-down Dyke path of height $1$ which gives rise to a left to right maximum thus requiring an $x$ tracker. Then we have another single $u$ step and we proceed recursively in this way leaving us eventually at the next left to right maximum which is point $A$ in the first example and $P$ in the second. In the first example, right of $A$ is again a possibly empty upside-down Dyck path, this time of maximum height $2$ where the non empty case is tracked again by $x$. We are referring to the path between $A$ and $B$ which is actually of height $1$. Once $P$ is reached, it is followed by the rest of the path which is conceived as a {\it{right to left portion}} of a Dyck path. In the section dealing with this, the generating function for these latter Dyck paths ending at height $r$ will be given and used, as will the generating function for Dyck paths of a fixed height $h$, which is used as indicated above for the possibly empty upside-down Dyck paths that occur sequentially before the point $P$ is attained.

\begin{figure}[h!]
\begin{center}
\begin{tikzpicture}[scale=0.5]
\draw[step=1cm,gray,thin] (0,0) grid (14,3);
\draw[step=1cm,gray,thin] (15,0) grid (29,3);
\draw [very thick] (0,0)--(2,2)--(3,1)--(5,3)--(7,1)--(8,2)--(10,0)--(12,2)--(14,0) ;
\draw [very thick] (15,0)--(16,1)--(17,0)--(20,3)--(22,1)--(23,2)--(24,1)--(26,3)--(29,0);
\draw (2.5,2.5) node[left]{$A$};\draw (4.5,2.5) node[left]{$B$};
\draw (1.5,1.5) node[left]{$E$};\draw (5,3) node[above]{$P$};
\draw (16.5,1.5) node[left]{$E$};
\draw (18.5,1.5) node[left]{$B$};\draw (20,3) node[above]{$P$};
\draw (2,0) node[below]{$2$};\draw (4,0) node[below]{$4$};\draw (6,0) node[below]{$6$};\draw (12,0) node[below]{$12$};
\draw (17,0) node[below]{$2$};
\draw (20,0) node[below]{$5$};\draw (26,0) node[below]{$11$};
\end{tikzpicture}\caption{Two Dyck paths of length $14$ and height $3$}
\end{center}\label{illustration}
\end{figure}
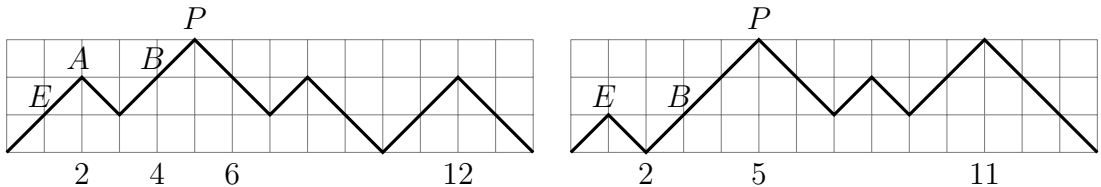

\section{Left to right maxima in Dyck paths
}

We start this section by referring to the paper \cite{H6}
by Prodinger on the first sojourn in Dyck paths. Using the notation from \cite{H6}, we let $C(h)$ be the number of  paths of height $\le h$ and steps which follow all rules of Dyck paths except that they terminate at height $h$, and we let $A(h)$ be the number of Dyck paths of height $\le h$ (which by definition end at height zero). It is shown in  \cite{H6} that

\begin{align}
C(h):=\frac{z^h\sqrt{1-4z^2}}{\lambda _1{}^{h+2}-\lambda _2{}^{h+2}}
\end{align}
and
\begin{align}
A(h):=\frac{\lambda _1{}^{h+1}-\lambda _2{}^{h+1}}{\lambda _1{}^{h+2}-\lambda _2{}^{h+2}},\label{Ah}
\end{align}

where $\lambda_1$ and $\lambda_2$, are given by

\begin{align}{\lambda_1=\frac{1+\sqrt{1-4z^2}}{2}; \lambda _2=\frac{1-\sqrt{1-4z^2}}{2}}.
\end{align}

As explained in the introductory section, we consider a sequence of possibly empty Dyck paths of height $\le h$ for $h=1,2,\hdots \ .$ At the end of each path in the sequence, we have a single up step that leads to the next left to right maximum and eventually to the first overall maximum of the entire Dyck path. We let $x$ count the number of left to right maxima attained by the Dyck path. This leads to our first theorem:

\begin{theorem}
The generating function
for the number of left to right maxima tracked by $x$, for Dyck paths of maximum height $r$ and length tracked by $z$ is
\begin{align}
F(x,z,r):=z^rxC(r)\prod_{h=1}^{r-1}(1+x(A(h)-1)).
\end{align}
\end{theorem}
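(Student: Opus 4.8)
The plan is to establish the formula by a direct combinatorial decomposition of each Dyck path of maximum height exactly $r$, reading off the three factors $z^r$, $xC(r)$ and $\prod_{h=1}^{r-1}(1+x(A(h)-1))$ as the contributions of an ascending part, a descending tail, and the excursions between successive record heights. First I would observe that, because the statement assigns the tail the plain generating function $C(r)$ (untouched by $x$) and treats the first overall maximum $P$ as a single maximum, the statistic being counted here is the number of \emph{strict} left to right maxima, i.e. peaks lying strictly above every peak to their left. The organising principle is that a strict left to right maximum is exactly a peak occurring at a height reached for the first time, so the whole argument is built around the first-passage times of the path.

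Concretely, given a Dyck path of maximal height $r$, let $\tau_h$ denote the first time it attains height $h$, so that $\tau_1<\tau_2<\cdots<\tau_r$ with $\tau_r$ the point $P$. The up step entering $\tau_h$ together with the portion strictly between $\tau_h$ and $\tau_{h+1}$ yields the canonical factorisation of the ascent as $u\,E_1\,u\,E_2\cdots u\,E_{r-1}\,u$, where $E_h$ is the excursion traversed between the first visit to height $h$ and the first visit to height $h+1$. By construction $E_h$ starts and ends at height $h$ and never rises above $h$, so it is a (possibly empty) upside-down Dyck path confined to $[0,h]$; reflecting every step ($u\leftrightarrow d$) turns it into an ordinary Dyck path of height $\le h$, enumerated by $A(h)$. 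This first-passage description is manifestly invertible, hence a bijection between Dyck paths of height $r$ and the data $(E_1,\dots,E_{r-1},\text{tail})$, and the $r$ recorded up steps supply the prefactor $z^r$.

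Next I would analyse how each block affects the $x$-count. If $E_h$ is empty the path climbs directly from height $h-1$ to height $h+1$ and no peak is formed at height $h$. If $E_h$ is nonempty then, being a reflected nonempty Dyck path, it begins with a down step; that down step together with the up step entering $\tau_h$ produces a peak at height $h$, and since every earlier peak sits strictly below $h$ this peak is a strict left to right maximum. Crucially it is the \emph{only} new one contributed by $E_h$: every further peak inside $E_h$ lies at height $\le h$ and so does not strictly exceed this record. This dichotomy is exactly encoded by the factor $1+x(A(h)-1)$, where the summand $1$ accounts for the empty block and $x(A(h)-1)$ for the nonempty ones, the $-1$ deleting the empty Dyck path from $A(h)$ and the single $x$ marking the one maximum created. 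Finally $P$ is always a peak and a strict record, giving the leading $x$, while the segment after $P$ descends from height $r$ to $0$ without ever exceeding $r$; read backwards (again swapping $u\leftrightarrow d$) it is an arbitrary path of height $\le r$ ending at height $r$, enumerated by $C(r)$, and it contributes no further strict maximum since nothing beats height $r$. Multiplying $z^r$, $x$, $\prod_{h=1}^{r-1}(1+x(A(h)-1))$ and $C(r)$ then gives the claimed $F(x,z,r)$.

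I expect the main obstacle to be the bookkeeping at the seam between an ascending up step and the following block: one must verify cleanly that a nonempty $E_h$ contributes exactly one strict maximum while an empty $E_h$ contributes none, and that no peak is double-counted or lost across this junction. A secondary point demanding care is the justification that the first-passage factorisation is a genuine bijection and that the two reflection arguments preserve the step-count statistic tracked by $z$, so that the excursions and the tail are indeed enumerated by $A(h)$ and $C(r)$ respectively.
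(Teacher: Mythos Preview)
Your proposal is correct and follows essentially the same first-passage decomposition that the paper sketches in its introductory discussion around Figure~1: up steps to successive record heights, with possibly empty reflected Dyck excursions of bounded height between them (giving the factors $1+x(A(h)-1)$), a final record at $P$ contributing the leading $x$, and a tail enumerated by $C(r)$. Your treatment is in fact more carefully argued than the paper's own, which presents the formula after an informal illustrative discussion rather than a formal proof, but the combinatorial idea is identical.
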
\label{Fxzr}

So, the total number of left to right maxima for Dyck paths of fixed height $r$ is found by differentiating the above function with respect to $x$ and setting $x=1$. The derivative at this point is given by

\begin{align}
\frac{\partial}{\partial x}F(x,z,r)\Big|_{x=1}&={z^r\text{  }C(r)\prod _{h=1}^{r-1} A(h)+z^r C(r)\prod _{h=1}^{r-1} A(h)\sum _{i=1}^{r-1} \frac{A(i)-1}{A(i)}}\notag\\
&=z^r\text{  }C(r)\prod _{h=1}^{r-1} A(h)\left(1+\sum _{i=1}^{r-1} \frac{A(i)-1}{A(i)}\right)\notag\\&=z^rC(r)\prod _{h=1}^{r-1} A(h)\left(r-\sum
_{i=1}^{r-1} \frac{1}{A(i)}\right)\label{tot}
\end{align}

Note that 
$z^r C(r)\prod_{h=1}^{r-1}A[h]$ telescopes to become

$$\frac{2^{3+2 r} z^{2 r} \left(1-4 z^2\right)}{\left(-\left(1-\sqrt{1-4 z^2}\right)^{1+r}+\left(1+\sqrt{1-4 z^2}\right)^{1+r}\right) \left(-\left(1-\sqrt{1-4
z^2}\right)^{2+r}+\left(1+\sqrt{1-4 z^2}\right)^{2+r}\right)}$$
but the full generating function becomes very complicated as a function of $z$.

To simplify this generating function, we substitute
\begin{align}
z^2=\frac{u}{(1+u)^2}
\end{align}

in (\ref{tot}) and obtain

\begin{align}
T(r):=\frac{\partial}{\partial x}F(x,z,r)\Big|_{x=1}=\frac{(1-u)^2 u^r (1+u)}{\left(1-u^{1+r}\right) \left(1-u^{2+r}\right)}\left(r-\sum _{i=1}^{r-1} \frac{1-u^{2+i}}{(1+u)
\left(1-u^{1+i}\right)}\right).
\end{align}

The full generating function for the total number of left-to-right maxima in all Dyck paths of length $n$ is
\begin{align}
Tot(u):=\sum _{r=1}^{\infty } T(r).
\end{align}

Consequently, we have the following theorem:

\begin{theorem}
The generating function $Tot(u)$ for the total number of left to right maxima in Dyck paths of length $n$ tracked by $z$ is given by
\begin{align}
Tot(u)=\sum _{r=1}^\infty \frac{(1-u)^2 u^r (1+u)}{\left(1-u^{1+r}\right) \left(1-u^{2+r}\right)}\left(r-\sum
_{i=1}^{r-1} \frac{1-u^{2+i}}{(1+u) \left(1-u^{1+i}\right)}\right),\label{totu}
\end{align}
where $z^2=\frac{u}{(1+u)^2}$.

\end{theorem}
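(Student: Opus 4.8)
The plan is to derive Theorem~2 directly from Theorem~1 together with the derivative computation in~(\ref{tot}), by carrying out the substitution $z^2 = u/(1+u)^2$ and then summing the resulting per-height totals over all admissible heights $r$. The conceptual glue is that every nonempty Dyck path attains a unique maximum height $r \ge 1$, and $\frac{\partial}{\partial x}F(x,z,r)\big|_{x=1}$ is precisely the length generating function for the total number of left to right maxima carried by all Dyck paths of maximum height \emph{exactly} $r$; hence summing this quantity over $r \ge 1$ gives the generating function for the total number of left to right maxima over all Dyck paths, with the empty path (which has no peaks) correctly excluded by starting the sum at $r=1$. Thus $Tot(u) = \sum_{r\ge 1} T(r)$ once the substitution is shown to turn~(\ref{tot}) into the summand displayed in~(\ref{totu}).

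The heart of the argument is the algebraic simplification under the substitution. First I would record that $1 - 4z^2 = (1-u)^2/(1+u)^2$, so that $\sqrt{1-4z^2} = (1-u)/(1+u)$, whence
\begin{align}
\lambda_1 = \frac{1}{1+u}, \qquad \lambda_2 = \frac{u}{1+u}, \qquad \lambda_2 = u\,\lambda_1. \notag
\end{align}
The relation $\lambda_2 = u\lambda_1$ is what makes everything collapse: substituting into~(\ref{Ah}) gives
\begin{align}
A(h) = \frac{\lambda_1^{h+1}\bigl(1-u^{h+1}\bigr)}{\lambda_1^{h+2}\bigl(1-u^{h+2}\bigr)} = \frac{(1+u)\bigl(1-u^{h+1}\bigr)}{1-u^{h+2}}, \notag
\end{align}
from which $1/A(i) = (1-u^{i+2})/\bigl((1+u)(1-u^{i+1})\bigr)$, exactly the term appearing in the inner sum of $T(r)$. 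The same computation applied to $C(r)$ yields $z^rC(r) = u^r(1-u)(1+u)^{1-r}/(1-u^{r+2})$.

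Next I would treat the prefactor $z^rC(r)\prod_{h=1}^{r-1}A(h)$. The product telescopes: the factors $1-u^{h+1}$ in the numerator of $A(h)$ cancel against $1-u^{h+2}$ at the previous index, leaving $\prod_{h=1}^{r-1}A(h) = (1+u)^r(1-u)/(1-u^{r+1})$. Multiplying this by the expression for $z^rC(r)$ above collapses the powers of $1+u$ and produces the prefactor $(1-u)^2u^r(1+u)/\bigl((1-u^{r+1})(1-u^{r+2})\bigr)$ of~(\ref{totu}). Combining this prefactor with the bracketed factor $r - \sum_{i=1}^{r-1} 1/A(i)$ coming from the last line of~(\ref{tot}) gives $T(r)$ verbatim, and summing over $r \ge 1$ closes the argument.

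The routine but error-prone part is the telescoping and the bookkeeping of the powers of $(1+u)$, so I would carry out those index shifts explicitly rather than by inspection. The one genuinely conceptual point requiring care is the identification of $\frac{\partial}{\partial x}F\big|_{x=1}$ as a count over paths of maximum height exactly $r$, so that summing over $r$ neither omits nor double-counts; this is what licenses the passage from $T(r)$ to $Tot(u)$. I do not expect to obtain a closed form for the sum itself here, as that analysis is deferred to the later asymptotic sections, so the statement of Theorem~2 is complete once the summand is verified and the height decomposition is justified.
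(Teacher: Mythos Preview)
Your proposal is correct and follows the same route as the paper: start from Theorem~1, differentiate and set $x=1$ to obtain~(\ref{tot}), perform the substitution $z^2=u/(1+u)^2$ to arrive at $T(r)$, and sum over $r\ge 1$. The paper simply asserts the outcome of the substitution, whereas you spell out the identities $\lambda_1=1/(1+u)$, $\lambda_2=u\lambda_1$, the resulting form of $A(h)$, and the telescoping of $\prod_{h=1}^{r-1}A(h)$; these explicit computations are a welcome addition and are all correct.
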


In order to obtain the series expansion for this, we use the equivalent inverse substitution for $u$, namely

\begin{align}
u= \frac{1-2 z^2-\sqrt{1-4 z^2}}{2 z^2}\label{reverse},
\end{align}

and obtain in terms of $z$,

\begin{align}
Tot(u)&=
z^2+2 z^4+6 z^6+\boldsymbol{19 z^8}+63 z^{10}+216 z^{12}+758 z^{14}+2705 z^{16}+9777 z^{18}\notag\\&+35698 z^{20}+O[z]^{21}
\end{align}

We illustrate the bold term of the series by means of the black dots in Figure {\ref{ltrmaxillustrated}}.

\begin{figure}[h!]
\begin{center}
\begin{tikzpicture}[scale=0.5]
\draw[very thick](0,15)--(1,16)--(2,15)--(3,16)--(4,15)--(5,16)--(6,15)--(7,16)--(8,15);
\filldraw[black] (1,16.2) circle (2pt);
\draw[black] (3,16.2) circle (3pt);
\draw[black] (5,16.2) circle (3pt);
\draw[black] (7,16.2) circle (3pt);
\draw[very thick] (18,8)--(21,11)--(22,10)--(23,11)--(26,8);
\draw[black] (6,10.2) circle (3pt);
\draw[black] (13,10.2) circle (3pt);
\draw[black] (23
,11.2) circle (3pt);
\filldraw[black] (21,11.2) circle (2pt);
\draw[very thick] (0,-1)--(3,2)--(6,-1)--(7,0)--(8,-1);
\filldraw[black] (3,2.2) circle (2pt);
\draw[very thick](0,4)--(2,6)--(3,5)--(5,7)--(8,4);
\filldraw[black] (2,6.2) circle (2pt);
\filldraw[black] (5,7.2) circle (2pt);
\draw[very thick](9,4)--(12,7)--(14,5)--(15,6)--(17,4);
\filldraw[black] (12,7.2) circle (2pt);
\draw[very thick](18,4)--(19,5)--(20,4)--(23,7)--(26,4);
\filldraw[black] (19,5.2) circle (2pt);
\filldraw[black] (23,7.2) circle (2pt);
\draw[step=1cm,gray,thin] (0,15) grid (8,16);
\draw[step=1cm,gray,thin] (9,15) grid (17,17);
\draw[step=1cm,gray,thin] (18,15) grid (26,17);
\draw[step=1cm,gray,thin] (0,-1) grid (8,2);
\draw[step=1cm,gray,thin] (9,-1) grid (17,3);
\draw[step=1cm,gray,thin] (0,4) grid (8,7);
\draw[step=1cm,gray,thin] (9,4) grid (17,7);
\draw[step=1cm,gray,thin] (18,4) grid (26,7);
\draw[step=1cm,gray,thin] (0,8) grid (8,10);
\draw[step=1cm,gray,thin] (9,8) grid (17,10);
\draw[step=1cm,gray,thin] (18,8) grid (26,11);
\draw[step=1cm,gray,thin] (0,12) grid (8,14);
\draw[step=1cm,gray,thin] (9,12) grid (17,14);
\draw[step=1cm,gray,thin] (18,12) grid (26,14);
\draw[very thick](9,-1)--(13,3)--(17,-1);
\filldraw[black] (13,3.2) circle (2pt);
\draw[very thick] (9,15)--(11,17)--(13,15)--(14,16)--(15,15)--(16,16)--(17,15);
\filldraw[black] (11,17.2) circle (2pt);
\draw[very thick] (18,15)--(19,16)--(20,15)--(21,16)--(22,15)--(24,17)--(26,15);
\filldraw[black] (19,16.2) circle (2pt);
\draw[black] (21,16.2) circle (3pt);
\filldraw[black] (24,17.2) circle (2pt);
\draw[very thick](0,12)--(1,13)--(2,12)--(4,14)--(6,12)--(7,13)--(8,12);
\filldraw[black] (1,13.2) circle (2pt);
\filldraw[black] (4,14.2) circle (2pt);
\draw[very thick] (9,12)--(11,14)--(13,12)--(15,14)--(17,12);
\filldraw[black] (11,14.2) circle (2pt);
\draw[black] (15,14.2) circle (3pt);
\draw[very thick](18,12)--(20,14)--(21,13)--(22,14)--(23,13)--(24,14)--(26,12);
\filldraw[black] (20,14.2) circle (2pt);
\draw[black] (22,14.2) circle (3pt);
\draw[black] (24,14.2) circle (3pt);
\draw[very thick](0,8)--(1,9)--((2,8)--(4,10)--(5,9)--(6,10)--(8,8);
\filldraw[black] (1,9.2) circle (2pt);
\filldraw[black] (4,10.2) circle (2pt);
\draw[very thick] (9,8)--(11,10)--(12,9)--(13,10)--(15,8)--(16,9)--(17,8);
\filldraw[black] (11,10.2) circle (2pt);
\end{tikzpicture}
\caption{All 14  Dyck paths of length 8 with 19 strict left to right maxima indicated by black dots and with circles indicating the additional weak left to right maxima.}\label{ltrmaxillustrated}
\end{center}
\end{figure}
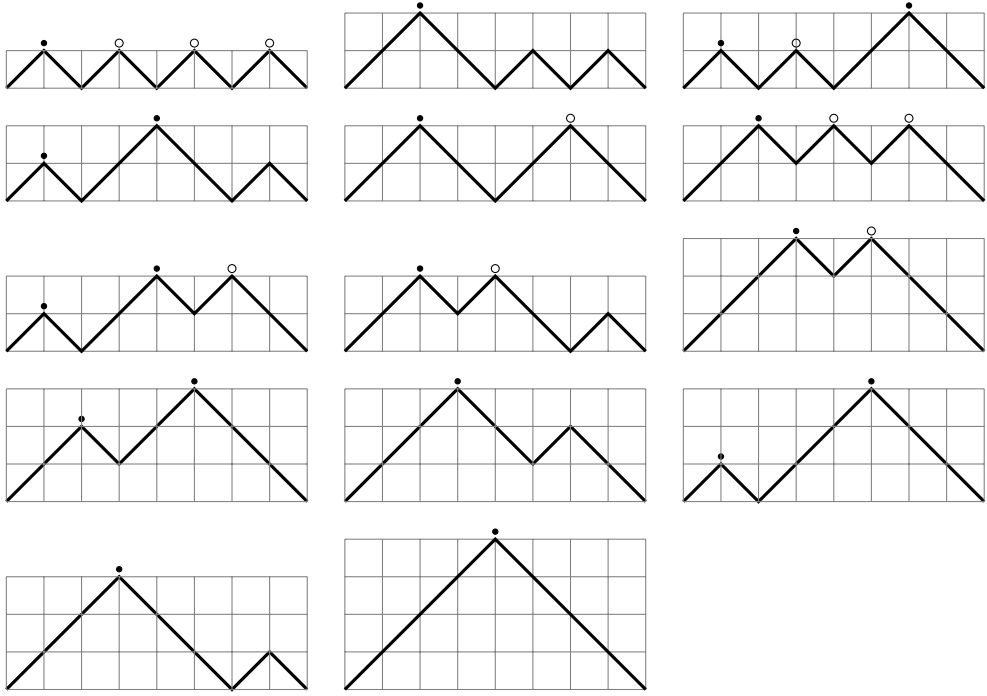

To simplify equation (\ref{totu}) we swop the order of the summations in the double sum, and thereafter use partial fractions on the second sum (which then telescopes as in line (\ref{simtotu1})) to obtain

\begin{align}
&\sum _{r=1}^{\infty } \frac{(1-u)^2 u^r (1+u)}{\left(1-u^{1+r}\right) \left(1-u^{2+r}\right)}\sum _{i=1}^{r-1} \frac{1-u^{2+i}}{(1+u)
\left(1-u^{1+i}\right)}\notag\\
&=(1-u)^2\sum _{i=1}^{\infty } \frac{1-u^{2+i}}{ \left(1-u^{1+i}\right)}\sum _{r=i+1}^{\infty } \frac{ u^r }{\left(1-u^{1+r}\right)
\left(1-u^{2+r}\right)}\notag\\
&={(1-u)^2\sum _{i=1}^{\infty } \frac{1-u^{2+i}}{ \left(1-u^{1+i}\right)}\frac{u^{1+i}}{(1-u) \left(1-u^{2+i}\right)}.\label{simtotu1}}\\\notag
\end{align}
Now changing the index of summation from $i$ to $r$,
\begin{align}
\sum _{i=1}^{\infty }\frac{1-u^{2+i}}{ \left(1-u^{1+i}\right)}\frac{u^{1+i}}{ \left(1-u^{2+i}\right)}=(1-u)\sum _{r=1}^{\infty } \frac{u^{1+r}}{
\left(1-u^{1+r}\right)}.
\end{align}

Altogether,

\begin{align}
Tot(u)&=\sum _{r=1}^\infty \frac{(1-u)^2 u^r (1+u)r}{\left(1-u^{1+r}\right) \left(1-u^{2+r}\right)}-(1-u)\sum
_{r=1}^{\infty } \frac{u^{1+r}}{ \left(1-u^{1+r}\right)}\notag\\&=\sum _{r=1}^\infty \frac{(1-u) u^r \left(r-u-r u^2+u^{3+r}\right)}{\left(1-u^{1+r}\right) \left(1-u^{2+r}\right)}\notag\\&=\sum _{r=1}^\infty \frac{ru^r-u^{1+r}-r u^{1+r}+u^{2+r}-r u^{2+r}+r u^{3+r}+u^{3+2 r}-u^{4+2 r}}{\left(1-u^{1+r}\right) \left(1-u^{2+r}\right)}.
\end{align}

Drop the first term $ru^r$ in the numerator above and  apply partial fractions to the remainder of the summand to  get

$$1-u+\frac{-1-r+2 u-r u-u^2+r u^2}{(1-u) \left(1-u^{1+r}\right)}+\frac{r+r u-r u^2}{(1-u) \left(1-u^{2+r}\right)}.$$

The separated first term with numerator $ru^r$ leads after partial fractions to

$${\frac{r u^r}{(1-u) \left(1-u^{1+r}\right)}-\frac{r u^{r+1}}{(1-u) \left(1-u^{2+r}\right)}}.$$
Altogether,
\begin{align}
Tot(u)&=\sum _{r=1}^\infty \left(1-u+\frac{-1-r+2 u-r u-u^2+r u^2}{(1-u) \left(1-u^{1+r}\right)}+\frac{r+r
u-r u^2}{(1-u) \left(1-u^{2+r}\right)}\right)\notag \\&+{\sum _{r=1}^\infty \frac{r u^r}{(1-u) \left(1-u^{1+r}\right)}-\sum _{r=1}^\infty \frac{r u^{r+1}}{(1-u) \left(1-u^{2+r}\right)}}.
\end{align}

To facilitate the evaluation of the infinite sums, we define a new function (where $\infty$ is replaced temporarily by finite $M$ in $Tot(u)$), namely:

\begin{align}
Tot2(u)&:=\sum _{r=1}^M \left(1-u+\frac{-1-r+2 u-r u-u^2+r u^2}{(1-u) \left(1-u^{1+r}\right)}+\frac{r+r
u-r u^2}{(1-u) \left(1-u^{2+r}\right)}\right)\notag \\&+{\sum _{r=1}^M \frac{r u^r}{(1-u) \left(1-u^{1+r}\right)}-\sum _{r=1}^M \frac{r u^{r+1}}{(1-u) \left(1-u^{2+r}\right)}}.
\end{align}

We now separate this into disjoint sums and shift the index of summation in the third and last sums: 

\begin{align}
Tot2(u)&=\sum_{r=1}^M (1-u)+\sum _{r=1}^M \frac{-1-r+2 u-r u-u^2+r u^2}{(1-u) \left(1-u^{1+r}\right)}+\sum
_{r=2}^{M+1} \frac{(r-1)\left(1+ u- u^2\right)}{(1-u) t(1-u^{1+r})}\notag\\&+\sum _{r=1}^M \frac{r u^r}{(1-u) \left(1-u^{1+r}\right)}-\sum _{r=2}^{M+1} \frac{(r-1) u^r}{(1-u) \left(1-u^{1+r}\right)}\notag\\
&=\sum _{r=1}^M (1-u)+\frac{-2+u}{(1-u) \left(1-u^2\right)}+\sum _{r=2}^M \frac{-1-r+2 u-r u-u^2+r
u^2}{(1-u) \left(1-u^{1+r}\right)}\notag\\&+\sum _{r=2}^M \frac{(r-1)\left(1+ u- u^2\right)}{(1-u) \left(1-u^{1+r}\right)}+\frac{M \left(1+u-u^2\right)}{(1-u) \left(1-u^{2+M}\right)}+\frac{u}{(1-u)
\left(1-u^2\right)}\notag\\&+{\sum _{r=2}^M \frac{r u^r}{(1-u) \left(1-u^{1+r}\right)}-\sum _{r=2}^M \frac{(r-1) u^r}{(1-u) \left(1-u^{1+r}\right)}-\frac{M u^{1+M}}{(1-u)
\left(1-u^{2+M}\right)}}.\label{totfinit}
\end{align}

We combine the terms in the sums from $r$ equals 2 to $M$  in (\ref{totfinit}) to get

$$\frac{-2+u+u^r}{(1-u) \left(1-u^{1+r}\right)}.$$

Then we simplify the rest to get

\begin{align}
Tot2(u)&=\sum _{r=2}^M \frac{-2+u+u^r}{(1-u) \left(1-u^{1+r}\right)}+\frac{2}{1-u^2}\notag\\&-\frac{M \left(-2+u+u^{1+M}+u^{2+M}-2
u^{3+M}+u^{4+M}\right)}{(1-u) \left(1-u^{2+M}\right)}.
\end{align}

Note that $Tot2(u)$ and $Tot(u)$ match at least for terms up to $\left[u^M\right]$.
Since for the present we are only interested in the terms up to $\left[u^M\right]$, we may set all higher power
terms equal to zero, to produce

\begin{align}
Tot2b(u)=\sum _{r=2}^M \frac{-2+u+u^r}{(1-u) \left(1-u^{1+r}\right)}+\frac{2}{1-u^2}+\frac{M (2-u)}{(1-u)
}.
\end{align}

Noting that 
$M=1+\sum _{r=2}^M 1,$

\begin{align}
Tot2b(u)&=\sum _{r=2}^M \frac{-2+u+u^r}{(1-u) \left(1-u^{1+r}\right)}-\frac{2}{1-u^2}+\frac{(2-u)}{(1-u)
}+\sum _{r=2}^M \frac{(2-u)}{(1-u) }\notag\\
&=\sum _{r=2}^M \frac{-2+u+u^r}{(1-u) \left(1-u^{1+r}\right)}+\frac{u}{1+u}+\sum _{r=2}^M \frac{(2-u)}{(1-u)
}.
\end{align}
Combine the summands in $\sum _{r=2}^M$. We may now allow $M \to \infty$ to finally obtain the simplified generating function as per the next theorem:

\begin{theorem}
The simplified generating function for the total number of left to right maxima in Dyck paths is
\begin{align}
Tot(u)=\sum _{r=1}^{\infty} \frac{(1-u) u^r}{1-u^{1+r}}.
\label{stot}
\end{align}
\end{theorem}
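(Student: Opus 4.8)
The plan is to collapse the double sum in \eqref{totu} into a single geometric-type sum. I would begin by splitting the summand of \eqref{totu} according to the lone factor $r$ and the inner sum over $i$, so that $Tot(u)$ becomes the difference of a ``diagonal'' part carrying the $r$ and a genuine double sum. For the double sum I would interchange the order of summation, writing $\sum_{i\ge 1}\sum_{r\ge i+1}$ in place of $\sum_{r\ge 1}\sum_{i=1}^{r-1}$; the resulting inner sum over $r$ has summand $u^r/\bigl((1-u^{1+r})(1-u^{2+r})\bigr)$, whose partial-fraction decomposition telescopes in $r$ and leaves only the boundary term at $r=i+1$. This is exactly the collapse recorded in \eqref{simtotu1}, after which a reindexing from $i$ to $r$ turns the double sum into the single sum $(1-u)\sum_{r\ge1}u^{1+r}/(1-u^{1+r})$.

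Next I would recombine this with the diagonal part, so that $Tot(u)$ is a single sum over $r$ of a rational function of $u$ and $u^r$ with a polynomial-in-$r$ numerator. The key technical device is to apply partial fractions, separating the $r$-independent rational pieces from the pieces whose coefficients grow linearly in $r$ (treating the term with numerator $ru^r$ on its own). Because several of the resulting pieces are individually divergent as formal series, and because the subsequent index shifts create boundary contributions, I would not manipulate the infinite sums directly. Instead I would truncate the upper limit at a finite $M$, defining $Tot2(u)$, and only insist that $Tot2(u)$ agree with $Tot(u)$ through order $[u^M]$.

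Working with the finite sum, the steps are: shift the index by one in the two sums whose denominator is $1-u^{2+r}$, converting them to $1-u^{1+r}$ denominators at the cost of numerators carrying a factor $r-1$ and of new boundary terms at $r=1$ and $r=M+1$; then merge the four sums now sharing a common range into the single compact summand $(-2+u+u^r)/\bigl((1-u)(1-u^{1+r})\bigr)$. At this stage the only surviving non-sum terms are explicit rational functions of $u$ together with $M$-dependent tails; since these tails have lowest order exceeding $u^M$, I may discard them to obtain $Tot2b(u)$. Finally I would use the bookkeeping identity $M=1+\sum_{r=2}^M 1$ to fold the linear-in-$M$ remainder back inside the sum, cancel the constant $(2-u)/(1-u)$ contributions, and combine the two surviving summands over $r=2$ to $M$. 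Letting $M\to\infty$ then yields \eqref{stot}.

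I expect the main obstacle to be precisely this boundary bookkeeping: the partial-fraction pieces do not converge individually, so every index shift must be accompanied by careful tracking of the endpoint terms at $r=1$, $r=M$ and $r=M+1$, and one must verify at each stage that the terms being dropped genuinely lie beyond order $u^M$. The truncate-and-match strategy, rather than any single clever identity, is what makes the telescoping rigorous, and arranging the constant and linear-in-$M$ contributions to cancel exactly is the delicate computational heart of the argument.
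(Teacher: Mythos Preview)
Your proposal is correct and follows essentially the same route as the paper's own derivation. Both swap the summation order in the double sum and telescope via partial fractions to reach \eqref{simtotu1}, recombine into a single sum over $r$, separate the $ru^r$ term and apply partial fractions, truncate at finite $M$ (what the paper calls $Tot2(u)$ and then $Tot2b(u)$), invoke the bookkeeping identity $M=1+\sum_{r=2}^{M}1$ to absorb the linear-in-$M$ remainder, and finally let $M\to\infty$.
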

\subsection{Formula for total number of left-to-right maxima }
In this section, we will obtain an exact formula for the total number of left-to-right maxima in terms of a well-known arithmetic function, namely the divisor function $d(r)$. Note that $$\sum_{r=1}^\infty \frac{u^r}{1-u^r}=\sum_{r=1}^\infty d(r)\,u^r$$.

To read off coefficients from equation (\ref{stot}), we observe that for any formal power series $f(z)$
\[[z^{2n}]f(z)=[u^n](1-u)(1+u)^{2n-1}f(z(u)).\]
This can be justified by using formal residue calculus, see for example \cite{H3}. Therefore
\begin{align}
[z^{2n}]Tot(z)&=[u^n](1-u)(1+u)^{2n-1}\sum_{r=1}^\infty\frac{(1-u) u^r}{1-u^{1+r}}\notag\\
&=[u^n](1-u)(1+u)^{2n-1}\sum _{r=1}^{\infty } (d(r+1)-d(r)) u^r\notag\\
&=\sum _{r=1}^n (d(r+1)-d (r)) \left(\binom{2
   n-1}{n-r}-\binom{2 n-1}{n-r-1}\right)
.
\end{align}
Thus we have shown:
\begin{theorem}\label{Th4}
The total number of left-to-right maxima in Dyck paths of semi-length $n$ is given by
\[\sum _{r=1}^n (d(r+1)-d (r)) \left(\binom{2
   n-1}{n-r}-\binom{2 n-1}{n-r-1}\right).\]
\end{theorem}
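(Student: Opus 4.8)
The plan is to read off the coefficient $[z^{2n}]Tot$ directly from the simplified generating function in equation (\ref{stot}), since by construction this coefficient is exactly the total number of left-to-right maxima over all Dyck paths of semi-length $n$. Two ingredients are already in hand: the closed form $Tot(u)=\sum_{r\ge 1}\frac{(1-u)u^r}{1-u^{1+r}}$, and the coefficient-extraction rule $[z^{2n}]f(z)=[u^n](1-u)(1+u)^{2n-1}f(z(u))$ justified by formal residue calculus. The work therefore splits into first re-expressing $Tot(u)$ as an ordinary power series in $u$ with divisor-function coefficients, and then applying the extraction rule term by term.

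First I would convert the Lambert-type series into a genuine power series in $u$. Starting from $\sum_{r\ge1}\frac{u^r}{1-u^r}=\sum_{r\ge1}d(r)u^r$, I reindex the sum $\sum_{r\ge1}\frac{u^r}{1-u^{1+r}}$ by setting $s=r+1$ to obtain $\frac1u\bigl(\sum_{s\ge1}\frac{u^s}{1-u^s}-\frac{u}{1-u}\bigr)=\frac1u\sum_{s\ge1}d(s)u^s-\frac1{1-u}$. Multiplying by the prefactor $(1-u)$ and collecting coefficients then gives the clean identity
\[Tot(u)=\sum_{r\ge1}\bigl(d(r+1)-d(r)\bigr)u^r,\]
which is the form used in the excerpt. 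An equivalent coefficientwise route is available as a check: $[u^n]\sum_{r\ge1}\frac{u^r}{1-u^{1+r}}$ counts the divisors $r+1$ of $n+1$ exceeding $1$, giving $d(n+1)-1$, after which the factor $(1-u)$ telescopes the count to $d(n+1)-d(n)$.

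Next I would substitute this series into the extraction rule and compute each coefficient. For fixed $r$ the contribution is $[u^n](1-u)(1+u)^{2n-1}u^r=[u^{n-r}](1-u)(1+u)^{2n-1}$, and expanding $(1-u)(1+u)^{2n-1}=(1+u)^{2n-1}-u(1+u)^{2n-1}$ yields $\binom{2n-1}{n-r}-\binom{2n-1}{n-r-1}$. Summing against the weights $d(r+1)-d(r)$ produces the stated formula. The final point to verify is that the infinite sum in $r$ truncates: both binomial coefficients vanish once $n-r<0$, so only $r=1,\dots,n$ survive, which pins down the upper limit $n$.

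I do not expect a serious obstacle, since both the generating function and the residue identity are already supplied. The only step needing genuine care is the divisor-function rewriting, where the reindexing and the handling of the missing $s=1$ term must be carried out strictly at the level of formal power series; a convenient safeguard is to test the resulting formula against the listed low-order coefficients (for example it returns $19$ at $n=4$, matching the bold term $19z^8$). Everything downstream is routine binomial coefficient extraction.
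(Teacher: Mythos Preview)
Your proposal is correct and follows essentially the same route as the paper: both start from the simplified form $Tot(u)=\sum_{r\ge1}\frac{(1-u)u^r}{1-u^{1+r}}$, rewrite it as $\sum_{r\ge1}(d(r+1)-d(r))u^r$, and then apply the residue identity $[z^{2n}]f=[u^n](1-u)(1+u)^{2n-1}f$ term by term. You supply more explicit justification for the divisor rewriting and the truncation of the sum at $r=n$ than the paper does, but the argument is the same.
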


\section{Asymptotics for strict left to right maxima}\label{as1}
In this section we find the asymptotic expression for the total number of strict left to right maxima in Dyck paths.
We will follow the approach used to study the height of planted plane trees by Prodinger in \cite{H3}. For related asymptotic calculations concerning the height of trees and lattice paths, see \cite{PP,H1,H2}.

First, we extract coefficients of $z^n$ in $Tot(u)$. That is we find

$$[z^n]\frac{1-u}{u}\sum_{r=2}^\infty \frac{u^r}{1-u^{r}}.$$

When $u$ is in terms of $z^2$,  by (\ref{reverse}) the function $Tot(u)$  has its dominant singularity at $z=1/2$ which is mapped to $u=1$. To study this further we set $u=e^{-t}$ and let $t \to 0$. Thus
\be \frac{1-u}{u}=e^t(1-e^{-t})=t+\frac{t^2}{2}+\frac{t^3}{6}+\cdots\,\,.\label{series1}\ee
To estimate the harmonic sum $f_1(t):=\sum_{r=2}^\infty \frac{e^{-rt}}{1-e^{-rt}}$ as $t \to 0$, we take the Mellin transform of $f_1(t)$, see \cite{FlajSedge09}, which is $f_1^*(s):=\int_0^\infty f_1(t) t^{s-1}\,dt.$
Thus
\[f_1^*(s)=\Gamma(s)\zeta(s)(\zeta(s)-1), \textrm{ for } Re(s) > 1.\]
By using the Mellin inversion formula, , we have
 $f_1(t)=\frac{1}{2 \pi i} \int_{2-i \infty}^{2+i \infty} f_1^*(s)\,t^{-s}\,ds$ (again see \cite{FlajSedge09}). By computing residues
this yields
\be f_1(t)\sim \frac{-1+\gamma -\log (t)}{t}+\frac{3}{4}-\frac{13 t}{144}+\cdots, \label{series2}\ee
where $\gamma$ is Euler's constant.

Let
\[g_1(t):=e^{t}(1-e^{-t})\,f_1(t).\]
 From (\ref{series1}) and (\ref{series2})
\begin{align}g_1(t)\sim  -\log (t) -1+\gamma+\left(\frac{3}{4}+\frac{1}{2}
   (-1+\gamma -\log (t))\right) t+\cdots.
\end{align}
Let $y=\sqrt{1-4z^2}$ and writing $e^{-t}=u=\frac{1-y}{1+y}$, we find $t=-\log\frac{1-y}{1+y}=2y+\frac{2y^3}{3}+\cdots$.\\
In terms of the $y$ variable, we therefore need to compute $g_1(2y+\frac{2y^3}{3}+\cdots)$.
\begin{align*}g_1\left(2y+\frac{2y^3}{3}+\cdots\right)&\sim (-1+\gamma -\log (2)-\log (y))+\frac{1}{2} (1+2 \gamma -2
   \log (2)-2 \log (y)) y\\& \qquad-\frac{y^2}{3}+\cdots.
\end{align*}

Replacing $y$ by $\sqrt{1-4z^2}$ gives
\begin{align*}
&-1+\gamma -\log (2)-\frac{1}{2} \log \left(1-4
   z^2\right)+\frac{1}{2} \left(1+2 \gamma -2 \log
   (2)-\log \left(1-4 z^2\right)\right) \sqrt{1-4 z^2}\\
&\qquad+\cdots\,\,.
\end{align*}
To use singularity analysis, see \cite{FlajSedge09}, it is convenient to put $z^2=x$, then we find the coefficient of $x^n$ in the above expression as $n \to \infty$. It is asymptotically equal to
\begin{align}
&\frac{1}{2 n}-\frac{\log (n)}{4\sqrt{\pi }
   n^{3/2}}+\frac{1-3 \gamma }{4 n^{3/2} \sqrt{\pi }} +\cdots\,.\label{B}
\end{align}

 To obtain the mean value we must divide by the total number of Dyck paths of semi-length $n$, i.e., as $n \to \infty$
\begin{equation}
\frac{1}{n+1}{2n \choose n}=2^{2n}\left(\frac{1}{n^{3/2} \sqrt{\pi}}  -\frac{9}{8\,\,n^{5/2} \sqrt{\pi}} +\frac{145}{128\,\,n^{7/2} \sqrt{\pi}}\right)+\cdots\,\,.\label{numberDyck}  \end{equation}

Hence, dividing (\ref{B}) by (\ref{numberDyck}) yields
\begin{theorem}\label{Th3}
The average number of strong left to right maxima in Dyck paths of semi-length $n$, as $n \to \infty$ is
\[\frac{\sqrt{\pi } \sqrt{n}}{2}-\frac{\log (n)}{4}+\frac{1}{4}
   (1-3 \gamma ) +O(n^{-1/2}).\]
\end{theorem}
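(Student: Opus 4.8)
The plan is to obtain the average as a quotient: the total number of strict left to right maxima over all Dyck paths of semi-length $n$, divided by the number $\frac{1}{n+1}\binom{2n}{n}$ of such paths, with both factors extracted from generating functions. The numerator is governed by the simplified generating function (\ref{stot}), $Tot(u)=\sum_{r\ge 1}(1-u)u^r/(1-u^{1+r})$, taken together with the substitution (\ref{reverse}) linking $u$ and $z$, under which $z^2=u/(1+u)^2$ and the dominant singularity $z=1/2$ is carried to $u=1$. Since $[z^n]Tot$ is controlled entirely by the behaviour of $Tot$ as $u\to 1$, I would first pass to the variable $u=e^{-t}$ and analyse the limit $t\to 0$.

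The first step is to isolate the singular part. Reindexing, $Tot(u)=\frac{1-u}{u}\sum_{r\ge 2}u^r/(1-u^r)$, and the prefactor $\frac{1-u}{u}=e^t(1-e^{-t})$ is an analytic germ with expansion (\ref{series1}); the whole problem therefore reduces to the small-$t$ behaviour of the harmonic sum $f_1(t):=\sum_{r\ge 2}e^{-rt}/(1-e^{-rt})$. Here I would use the Mellin transform, obtaining term by term $f_1^*(s)=\Gamma(s)\zeta(s)(\zeta(s)-1)$ for $\mathrm{Re}(s)>1$, and then recover $f_1$ by inverting along a vertical line and pushing the contour leftward, collecting residues. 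The double pole of the integrand at $s=1$ yields the leading $(\gamma-1-\log t)/t$, the simple pole at $s=0$ (where $\zeta(0)=-\tfrac12$) contributes the constant $3/4$, and poles further left give the corrections, producing (\ref{series2}). Multiplying by the analytic prefactor gives $g_1(t):=e^t(1-e^{-t})f_1(t)$ with leading small-$t$ term $-\log t-1+\gamma$.

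Next I would convert this $t$-expansion into a $z$-expansion amenable to singularity analysis. With $y=\sqrt{1-4z^2}$ one has $u=(1-y)/(1+y)$, hence $t=\log\frac{1+y}{1-y}=2y+\tfrac23 y^3+\cdots$, a power series in $y$ with vanishing constant term; composing $g_1$ with this series and re-expanding, then using $\log y=\tfrac12\log(1-4z^2)$, produces an expansion whose leading part is $-1+\gamma-\log 2-\tfrac12\log(1-4z^2)$ together with a $\sqrt{1-4z^2}$ contribution that itself carries a $\log(1-4z^2)$ factor. Setting $z^2=x$ and applying the transfer theorems to the $\log(1-4x)$, $\sqrt{1-4x}$ and $\sqrt{1-4x}\log(1-4x)$ singular types gives the coefficient asymptotics (\ref{B}): the logarithmic term contributes the dominant $\tfrac{1}{2n}$ (carrying the factor $2^{2n}$ as in (\ref{numberDyck})), while the square-root terms supply the $\log n/n^{3/2}$ and $1/n^{3/2}$ corrections. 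Dividing by the Catalan asymptotic (\ref{numberDyck}), so that the common factor $2^{2n}$ cancels, and simplifying then yields the stated mean $\frac{\sqrt\pi\sqrt n}{2}-\frac{\log n}{4}+\frac14(1-3\gamma)+O(n^{-1/2})$.

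I expect the main obstacle to be the faithful bookkeeping of logarithmic and $\gamma$-dependent constants through three successive asymptotic regimes. The $-\log t$ singularity of $g_1$ must be tracked exactly as it becomes $-\tfrac12\log(1-4z^2)$ and then, under singularity analysis, a $\log n/n^{3/2}$ contribution; in particular the mixed $\sqrt{1-4z^2}\log(1-4z^2)$ term is what produces the $-\log n/4$, while its constant companion combines with the pure $\sqrt{1-4z^2}$ term to give $\tfrac14(1-3\gamma)$ after division, so the relevant coefficients and Euler-constant contributions must be carried without slippage. A secondary care point is the analytic justification: the leftward contour shift in the Mellin inversion, and the applicability of the transfer theorem, both require verifying that $Tot$ extends analytically to a suitable $\Delta$-domain at $z=1/2$ with uniform error control there.
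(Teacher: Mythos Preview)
Your proposal is correct and follows essentially the same route as the paper: rewrite $Tot(u)$ as $\frac{1-u}{u}\sum_{r\ge 2}u^r/(1-u^r)$, set $u=e^{-t}$, Mellin-transform the harmonic sum to $\Gamma(s)\zeta(s)(\zeta(s)-1)$, invert via residues to get (\ref{series2}), pass to $y=\sqrt{1-4z^2}$ via $t=2y+\tfrac{2}{3}y^3+\cdots$, apply singularity analysis to the resulting $\log$ and $\sqrt{\phantom{x}}$ terms, and divide by the Catalan asymptotics. The paper does exactly this, with the same intermediate expansions and the same constants.
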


\begin{remark} The asymptotic formula of Theorem~\ref{Th3} when $n=200$ yields 11.0257 for the average capacity. Using the exact formula of Theorem~\ref{Th4} divided by the Catalan number for $n=200$ yields 11.0503 which is indeed a very good match.
\end{remark}
\begin{remark}
The number of strong left to right maxima is bounded above
by the height of the path, which is known to be $\sqrt{\pi}\sqrt{n}$ as $n \to \infty$, (see e.g., \cite{H3}).
We see that asymptotically the average number is half of the height.
\end{remark}

\section{Weak left to right maxima in Dyck paths}
For this question we first need a generating function for Dyck paths of height $h$ which have only a single return to the $x$ axis. So using the formula above from (\ref{Ah}), we obtain the generating function for these where $h\ge1$ as 
\begin{align}
D(h,z)=z^2A(h-1).
\end{align}

Now in order to construct the generating function $E(h,x,z)$ for the number of times a Dyck path of length $n$ tracked by $z$, returns to $0$ where the latter is  tracked by a variable $x$ in the generating function, we construct a sequence of such Dyck paths where each term in the generating function for this sequence is multiplied by $x$.
Thus we obtain
\begin{align}
E(h,x,z)=\frac{1}{1-xD(h,z)}.    
\end{align}
We now reiterate the construction in Theorem \ref{Fxzr} to obtain
\begin{theorem}
The generating function
for the number of weak left to right maxima, tracked by $x$, for Dyck paths of maximum height $r$ and length tracked by $z$ is
\begin{align}
F(x,z,r):=z^{r+1}xC(r-1)\prod_{h=1}^{r}E(h,x,z).\label{Fweak}
\end{align}
\end{theorem}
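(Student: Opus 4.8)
The plan is to mirror the decomposition that established the strict case in Theorem~\ref{Fxzr}, replacing the building block for a single ``flat stretch'' with the appropriate block for weak maxima. Recall that in the strict case, before reaching the overall peak $P$, the path consists of a sequence of single up steps, each followed by a possibly empty upside-down Dyck path of height $\le h$ (for $h=1,2,\dots,r-1$), and the factor $1+x(A(h)-1)$ encodes that such an excursion contributes a new strict left-to-right maximum (tracked by $x$) precisely when it is nonempty. For weak maxima the accounting changes: a peak at a given height that merely \emph{ties} the current record still counts as a weak left-to-right maximum. I would therefore first argue carefully that at height $h$ the relevant object is not a single upside-down excursion but a \emph{sequence} of them, each of which returns to the current record height and hence each of which creates a fresh weak maximum.

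The key computational step is to identify this sequence-at-each-height with the factor $E(h,x,z)=\frac{1}{1-xD(h,z)}$ just constructed. I would verify that $D(h,z)=z^2A(h-1)$ is exactly the generating function for a single upside-down excursion that drops from the record height and makes exactly one return to it: the two boundary steps (down then up, reflected) contribute $z^2$, and the material strictly below is a Dyck path of height $\le h-1$, counted by $A(h-1)$ from~(\ref{Ah}). Since each such return produces a peak tying the record, it is a weak maximum and must carry a factor $x$; summing over the number $k\ge 0$ of returns gives the geometric series $\sum_{k\ge0}(xD(h,z))^k=\frac{1}{1-xD(h,z)}=E(h,x,z)$. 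Taking the product $\prod_{h=1}^{r}E(h,x,z)$ over the increasing record heights $h=1,\dots,r$, and prepending the initial up step together with the final ascent to the global maximum, reproduces the claimed shape of~(\ref{Fweak}).

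It remains to pin down the exponents and the residual factor $C(r-1)$. Here I would track the vertical bookkeeping explicitly: reaching record height $r$ by the sequence of single up steps costs $z^{r}$ in up steps (one per record level), the very first step of the whole path contributes the extra $z$, and the overall maximum $P$ sits at height $r$ and is tracked by a single $x$; the portion of the path after $P$ is a right-to-left Dyck tail descending from height $r$, whose generating function is $C(r-1)$ by the same reasoning used in the strict case (the index shift from $r$ to $r-1$ reflecting that the final record-setting up step has already been accounted for). Assembling these pieces yields $z^{r+1}x\,C(r-1)\prod_{h=1}^{r}E(h,x,z)$.

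I expect the main obstacle to be the precise combinatorial justification that each tie at the record height is a genuine \emph{distinct} weak left-to-right maximum and that the geometric-series decomposition counts each configuration exactly once without double-counting the transition between record levels. In particular, one must be careful that the boundary between the height-$h$ sequence (tracked by $E(h,x,z)$) and the single up step advancing the record to height $h+1$ is unambiguous, so that the factorization into $\prod_{h=1}^{r}E(h,x,z)$ is clean; getting the index ranges and the matching $z^{r+1}$ and $C(r-1)$ consistent is the delicate part, whereas the algebra itself is a direct transcription of the argument already given for Theorem~\ref{Fxzr}.
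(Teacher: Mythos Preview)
Your overall strategy---mirror the construction of Theorem~\ref{Fxzr}, replacing the single optional excursion $1+x(A(h)-1)$ at each level by the geometric series $E(h,x,z)=\frac{1}{1-xD(h,z)}$ because every return to the current record height produces a weak maximum---is exactly what the paper does, and your identification of $D(h,z)=z^2A(h-1)$ as a single upside-down excursion is correct.

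The gap is in your bookkeeping for the extra factor of $z$ and the shift from $C(r)$ to $C(r-1)$. Your claim that ``the very first step of the whole path contributes the extra $z$'' is wrong: the $r$ up steps that carry the path from level $0$ to level $r$ (one before each $E(h,x,z)$) already include the very first step, so that accounts only for $z^r$. Likewise, the explanation that $C(r-1)$ appears because ``the final record-setting up step has already been accounted for'' is not the reason for the index shift. The correct accounting is as follows. After the sequence $E(r,x,z)$ of excursions at the top level, the path sits at height $r$; the explicit $x$ marks the \emph{last} weak maximum at height $r$, and the $d$-step of that peak contributes the additional $z$ (so $z^r\cdot z=z^{r+1}$). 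From that point the path is at height $r-1$ and must descend to $0$ \emph{without ever touching height $r$ again}, since any further visit to level $r$ would be another weak left-to-right maximum and would have been absorbed into the $E(r,x,z)$ factor. Hence the tail is a (reversed) bounded path of height at most $r-1$, counted by $C(r-1)$. This is precisely what distinguishes the weak case from the strict case, where ties at height $r$ after the first peak are \emph{not} maxima and the tail may revisit level $r$, giving $C(r)$.
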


To obtain the generating function for the total number of weak left to right maxima, we once again differentiate (\ref{Fweak}) and evaluate this at $x=1$. We obtain
\begin{theorem}
\label{wet}
The generating function
for the total number of weak left to right maxima for Dyck paths of length $n$ tracked by $z$ is
\begin{align}
WTot(u):=\sum _{r=1}^\infty \frac{(1-u) u^r
   \left(1-u^2\right)}{\left(1-u^{1+r}\right)
   \left(1-u^{2+r}\right)}\left(1-r+(1+u) \sum _{i=1}^r \frac{1-u^{1+i}}{1-u^{2+i}}\right)
   \end{align}
   where $z^2=\frac{u}{(1+u)^2}$.
\end{theorem}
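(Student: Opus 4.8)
The plan is to differentiate the product generating function (\ref{Fweak}) with respect to $x$, set $x=1$, and then carry out the substitution $z^2=\frac{u}{(1+u)^2}$ so that every ingredient collapses into the rational form asserted for $WTot(u)$. Write $E_h:=E(h,1,z)=\frac{1}{1-D(h,z)}$ and $G(x):=x\prod_{h=1}^r E(h,x,z)$. Since $\frac{\partial}{\partial x}E(h,x,z)=D(h,z)E(h,x,z)^2$, logarithmic differentiation of the product gives
\[
\frac{\partial}{\partial x}F(x,z,r)\Big|_{x=1}=z^{r+1}C(r-1)\prod_{h=1}^r E_h\Bigl(1+\sum_{h=1}^r D(h,z)E_h\Bigr).
\]
This is the shape that must be matched term by term with the summand of $WTot(u)$ after the change of variable.

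First I would record what the substitution does to the basic quantities. From $z^2=\frac{u}{(1+u)^2}$ one gets $\sqrt{1-4z^2}=\frac{1-u}{1+u}$, hence $\lambda_1=\frac{1}{1+u}$ and $\lambda_2=\frac{u}{1+u}$, so that $\lambda_2/\lambda_1=u$. Feeding this into (\ref{Ah}) yields the clean form $A(h)=(1+u)\frac{1-u^{h+1}}{1-u^{h+2}}$, and a short computation of the denominator $1-D(h,z)=1-z^2A(h-1)$ shows $E_h=(1+u)\frac{1-u^{h+1}}{1-u^{h+2}}$; that is, $E_h=A(h)$ (as it must, since a sequence of single-return paths of height $\le h$ is just an arbitrary path of height $\le h$). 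Likewise $z^{r+1}C(r-1)=\frac{(1-u)u^r}{(1+u)^r\left(1-u^{r+1}\right)}$.

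The two remaining simplifications are what make the formula close up. The product telescopes,
\[
\prod_{h=1}^r E_h=(1+u)^r\prod_{h=1}^r\frac{1-u^{h+1}}{1-u^{h+2}}=(1+u)^r\frac{1-u^2}{1-u^{r+2}},
\]
and multiplying by $z^{r+1}C(r-1)$ cancels the powers of $(1+u)$ to leave the prefactor $\frac{(1-u)u^r\left(1-u^2\right)}{\left(1-u^{r+1}\right)\left(1-u^{r+2}\right)}$. For the bracketed sum, I would verify the identity $D(h,z)E_h=E_h-1$ (equivalently $\frac{u\left(1-u^h\right)}{1-u^{h+2}}=(1+u)\frac{1-u^{h+1}}{1-u^{h+2}}-1$), whence
\[
1+\sum_{h=1}^r D(h,z)E_h=1-r+\sum_{h=1}^r E_h=1-r+(1+u)\sum_{i=1}^r\frac{1-u^{1+i}}{1-u^{2+i}}.
\]
Combining the prefactor with this bracket reproduces exactly the summand of $WTot(u)$, and summing over $r\ge 1$ finishes the proof.

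The computations are all routine; the only place demanding care is the algebra of the substitution. The crux is spotting the two identities $E_h=A(h)$ and $D(h,z)E_h=E_h-1$, since these are precisely what turn the differentiated product into a telescoping product times a sum of the $E_h$, producing the stated closed form rather than an unwieldy expression in $\lambda_1$ and $\lambda_2$.
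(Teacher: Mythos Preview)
Your proof is correct and follows essentially the same route as the paper: differentiate the product~(\ref{Fweak}) at $x=1$ to obtain $z^{r+1}C(r-1)\prod_h E_h\bigl(1+\sum_i D(i,z)E_i\bigr)$, then pass to the $u$-variable and telescope. You simply make explicit the intermediate identities (in particular $E_h=A(h)$ and the trivial $D E_h=E_h-1$) that the paper leaves to the reader.
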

\begin{proof}
The derivative of (\ref{Fweak}) is
$$\frac{\partial }{\partial x}F(x,z,r)\Big |_{x=1}=z^{r+1}C(r-1) \prod _{h=1}^r E(h,1,z)\left(1+
  \sum_{i=1}^r \frac{D(i,z)}{1-D(i,z)}\right).$$
Putting $z^2=\frac{u}{(1+u)^2}$ in the formula above we obtain

\begin{align}
z^{r+1} C(r-1) \prod _{h=1}^r \frac{1}{1-z^2 A(h-1)}=\frac{(1-u) u^r \left(1-u^2\right)}{\left(1-u^{1+r}\right)
\left(1-u^{2+r}\right)},
\end{align}

while 
the remaining bracketed part becomes
$$1-r+(1+u) \sum _{i=1}^r \frac{1-u^{1+i}}{1-u^{2+i}}.$$
\end{proof}

Now, we simplify Theorem \ref{wet}. The double sum becomes
$$\left(1-u^2\right)^2 \sum _{i=1}^{\infty }
   \frac{1-u^{1+i}}{1-u^{2+i}}\sum _{r=i}^{\infty } \frac{u^r}{\left(1-u^{1+r}\right)
   \left(1-u^{2+r}\right)}.$$ 
We use partial fractions on the $r$-sum and then the double sum  telescopes to
$$\frac{\left(1-u^2\right)^2}{(1-u) u}\sum_{i=1}^{\infty } \frac{u^{i+1}}{1-u^{i+2}}.$$
This is then combined with the single sum which simplifies to

\begin{align}
\sum _{r=1}^{\infty} \left(\frac{(1-u) u^r
\left(1-u^2\right) (1-r)}{\left(1-u^{1+r}\right)
\left(1-u^{2+r}\right)}+\frac{\left(1-u^2\right)^2
u^{1+r}}{(1-u) u \left(1-u^{2+r}\right)}\right).
\label{pst}
\end{align}
In order to further simplify (\ref{pst}) we replace $\infty$ by finite $M$ and then apply partial fractions to the summand of the first term which splits up as 
\begin{align}
&\frac{(-1+r) (1-u) (1+u)}{u
   \left(1-u^{1+r}\right)}+\frac{(-1+r+1) (1-u) (1+u)}{u
   \left(1-u^{2+r}\right)}-\frac{(1-u) (1+u)}{u
   \left(1-u^{2+r}\right)}.  
\end{align}

This is telescoping and simplifies to
\begin{align}
 \frac{(-1+M+1) (1-u) (1+u)}{u \left(1-u^{2+M}\right)}+\sum
   _{r=1}^M \left(\frac{\left(1-u^2\right)^2 u^{1+r}}{(1-u)
   u \left(1-u^{2+r}\right)}-\frac{(1-u) (1+u)}{u
   \left(1-u^{2+r}\right)}\right).    
\end{align}

Now replace $M$ by $\sum_{r=1}^M 1$, then reversing the previous replacement, letting $M$ tend to $\infty$, and finally combining all summands, we obtain
\begin{theorem}
The simplified generating function
for the total number of weak left to right maxima for Dyck paths of length $n$ tracked by $z$ is
\begin{align}
WTot(u)=
\sum _{r=1}^{\infty } \frac{\left(1-u^2\right)
   u^r}{1-u^{2+r}}.
\end{align}\label{fwtot}
\end{theorem}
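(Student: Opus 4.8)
The plan is to start from the unsimplified expression in Theorem~\ref{wet} and reduce it to the single clean sum by a two-stage process: first collapse the internal double sum by interchanging the order of summation and telescoping, then clear the remaining rational pieces by a finite-truncation/telescoping argument.

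First I would split the summand of Theorem~\ref{wet} into the part coming from $(1-r)$, which is already a single sum, and the part coming from the inner sum $(1+u)\sum_{i=1}^r \frac{1-u^{1+i}}{1-u^{2+i}}$. For the latter I would interchange the order of the two summations, turning $\sum_{r=1}^\infty\sum_{i=1}^r$ into $\sum_{i=1}^\infty\sum_{r=i}^\infty$, so that the only $r$-dependent factor left inside is $\frac{u^r}{(1-u^{1+r})(1-u^{2+r})}$. The key observation is the telescoping partial fraction
\[
\frac{u^r}{\left(1-u^{1+r}\right)\left(1-u^{2+r}\right)}=\frac{1}{u(1-u)}\left(\frac{1}{1-u^{1+r}}-\frac{1}{1-u^{2+r}}\right),
\]
so that $\sum_{r=i}^\infty$ telescopes (using $u^{1+r}\to 0$ as $r\to\infty$ for $|u|<1$) to a closed form proportional to $\frac{u^i}{(1-u)(1-u^{1+i})}$. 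Substituting this back cancels the factor $1-u^{1+i}$ in the $i$-summand; since the combined prefactor collapses as $(1-u)(1-u^2)(1+u)=(1-u^2)^2$, the double sum becomes $\frac{(1-u^2)^2}{(1-u)u}\sum_{i=1}^\infty\frac{u^{1+i}}{1-u^{2+i}}$, and recombining with the single $(1-r)$ sum yields expression~(\ref{pst}).

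The second stage reduces (\ref{pst}) to the final form. Here I would replace the upper limit $\infty$ by a finite $M$, apply partial fractions to the first summand so that every piece has the form $\frac{c(r)}{1-u^{1+r}}$ or $\frac{c(r)}{1-u^{2+r}}$, and then exploit the index shift $r\mapsto r+1$ between the two families to telescope. The one subtlety is that the telescoping leaves a stray term proportional to $M$ together with boundary terms at $r=M$; these are handled by writing $M=\sum_{r=1}^M 1$, so that the linear-in-$M$ contribution is reabsorbed into the sum and cancels the offending pieces term by term. Letting $M\to\infty$ (the surviving boundary terms tend to finite limits since $u^{2+M}\to 0$) and combining the remaining summands collapses everything to the single sum $\sum_{r=1}^\infty\frac{(1-u^2)u^r}{1-u^{2+r}}$.

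The main obstacle is the bookkeeping in the second stage: keeping the several partial-fraction pieces aligned so that the telescoping is exact, tracking the $M$-dependent leftovers, and justifying that the reabsorption $M=\sum 1$ legitimately cancels the linear term before the limit is taken. The first stage is routine once the telescoping partial fraction is spotted; the genuine care is needed in verifying that no finite boundary contribution is lost when passing to $M\to\infty$, which is precisely what guarantees the clean final identity.
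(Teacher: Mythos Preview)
Your proposal is correct and follows essentially the same route as the paper: interchange the double summation and telescope the inner $r$-sum via the partial fraction $\frac{u^r}{(1-u^{1+r})(1-u^{2+r})}=\frac{1}{u(1-u)}\bigl(\frac{1}{1-u^{1+r}}-\frac{1}{1-u^{2+r}}\bigr)$ to reach~(\ref{pst}), then truncate at $M$, split by partial fractions, telescope via the shift $r\mapsto r+1$, and absorb the stray linear-in-$M$ boundary term using $M=\sum_{r=1}^M 1$ before letting $M\to\infty$. This is exactly the paper's argument, including the same bookkeeping device for the $M$-term.
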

This has series expansion
$$z^2+3 z^4+9 z^6+{\bf{29 z^8}}+98 z^{10}+341 z^{12}+1210 z^{14}+4356
   z^{16}+15860 z^{18}+58276 z^{20}+O\left(z^{21}\right).$$
This is illustrated in Figure \ref{ltrmaxillustrated}, where the dots and circles mark all 29 of the weak left to right maxima in Dyck paths of length 8.

\subsection{Formula for total number of weak left-to-right maxima }
In this section, we again obtain an exact formula for the total number of left-to-right maxima in terms of the divisor function $d(r)$. 
To read off coefficients from Theorem~\ref{fwtot}, as before
\[[z^{2n}]f(z)=[u^n](1-u)(1+u)^{2n-1}f(z(u)).\]
 Therefore
\begin{align}
[z^{2n}]WTot(z)&=[u^n](1-u)(1+u)^{2n-1}\sum_{r=1}^\infty\frac{(1-u^2) u^r}{1-u^{2+r}}\notag\\
&=[u^n](1-u)(1+u)^{2n-1}\sum _{r=1}^{\infty } (d(r+2)-d(r)) u^r\notag.
\end{align}
From this it follows that:
\begin{theorem}\label{Th4}
The total number of weak left-to-right maxima in Dyck paths of semi-length $n$ is given by
\[\sum _{r=1}^n (d(r+2)-d (r)) \left(\binom{2
   n-1}{n-r}-\binom{2 n-1}{n-r-1}\right).\]
\end{theorem}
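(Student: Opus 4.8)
The plan is to follow exactly the same coefficient-extraction pipeline that produced the strict left-to-right maxima formula, starting from the simplified generating function $WTot(u)=\sum_{r=1}^{\infty}\frac{(1-u^2)u^r}{1-u^{2+r}}$ of the preceding theorem. First I would rewrite the summand so that the divisor-function identity becomes applicable. The stated Lambert-series fact $\sum_{r\ge1}\frac{u^r}{1-u^r}=\sum_{r\ge1}d(r)u^r$ is the key input; expanding $\frac{u^r}{1-u^{2+r}}$ as a geometric series and reindexing should convert $WTot(u)$ into a clean power series whose coefficients are differences of divisor values. Concretely, I expect the factor $(1-u^2)=(1-u)(1+u)$ together with the shift in the exponent $u^{2+r}$ to collapse the sum into $\sum_{r\ge1}\bigl(d(r+2)-d(r)\bigr)u^r$, mirroring how the strict case yielded $d(r+1)-d(r)$.

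Second, I would apply the formal residue identity quoted earlier, namely
\[
[z^{2n}]f(z)=[u^n](1-u)(1+u)^{2n-1}f(z(u)),
\]
to the series $WTot(z)$. Substituting the simplified expression from step one gives
\[
[z^{2n}]WTot(z)=[u^n](1-u)(1+u)^{2n-1}\sum_{r=1}^{\infty}\bigl(d(r+2)-d(r)\bigr)u^r.
\]
The remaining task is purely the extraction of $[u^n]$ from a product of $(1-u)(1+u)^{2n-1}$ with a power series in $u$. Since multiplying by $u^r$ merely shifts the coefficient index, I would read off $[u^{n-r}]$ of $(1-u)(1+u)^{2n-1}$, which is $\binom{2n-1}{n-r}-\binom{2n-1}{n-r-1}$, exactly the binomial difference appearing in the statement. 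Summing over $r$ from $1$ to $n$ (higher $r$ contribute nothing since the binomials vanish) yields the claimed closed form.

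The only genuinely delicate point is the first step: justifying the passage from $\frac{(1-u^2)u^r}{1-u^{2+r}}$ to the divisor-difference series $\sum(d(r+2)-d(r))u^r$. One must be careful that the reindexing of the double expansion converges as a formal power series and that the telescoping of divisor terms across the shift by $2$ is handled correctly; this is where an off-by-one in the exponent would corrupt the final formula. Everything after that is the same routine binomial coefficient extraction already validated in the strict case, so I would treat the divisor-series reduction as the heart of the argument and the residue-calculus extraction as a direct transcription of the earlier computation.
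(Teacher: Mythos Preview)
Your proposal is correct and follows essentially the same route as the paper: the paper likewise starts from the simplified generating function $WTot(u)=\sum_{r\ge1}\frac{(1-u^2)u^r}{1-u^{2+r}}$, rewrites it as $\sum_{r\ge1}(d(r+2)-d(r))u^r$, applies the same residue identity $[z^{2n}]f(z)=[u^n](1-u)(1+u)^{2n-1}f(z(u))$, and then reads off the binomial difference $\binom{2n-1}{n-r}-\binom{2n-1}{n-r-1}$. Your identification of the divisor-series rewriting as the only nontrivial step is exactly on point.
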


\section{Asymptotics for weak left to right maxima}
To find an asymptotic expression for $WTot(u)$, we reiterate  the approach in Section \ref{as1}.
This yields

\begin{theorem}\label{Th3}
The average number of weak left to right maxima in Dyck paths of semi-length $n$, as $n \to \infty$ is
\[\sqrt{\pi } \sqrt{n}-\log (n)+\frac{1}{2} (5-6 \gamma ) +O(n^{-1/2}).\]
\end{theorem}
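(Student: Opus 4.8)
The plan is to reiterate verbatim the Mellin-transform and singularity-analysis pipeline from Section~\ref{as1}, now applied to the simplified generating function $WTot(u)=\sum_{r=1}^\infty (1-u^2)u^r/(1-u^{2+r})$ of Theorem~\ref{fwtot}. First I would extract the coefficient of $z^n$, writing $WTot$ in a form amenable to the substitution $u=e^{-t}$ as $t\to0$, since the dominant singularity at $z=1/2$ again corresponds to $u=1$. Reindexing the sum so the divisor-type harmonic structure is exposed, the relevant piece is $f_2(t):=\sum_{r\ge 1} e^{-(r+2)t}/(1-e^{-(r+2)t})=\sum_{m\ge 3} e^{-mt}/(1-e^{-mt})$, which differs from the $f_1(t)=\sum_{r\ge2} e^{-rt}/(1-e^{-rt})$ of the strict case only by dropping the $m=2$ term. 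The prefactor is now $(1-u^2)/u = e^{t}-e^{-t}=2t+\tfrac{t^3}{3}+\cdots$ rather than $(1-u)/u=t+\tfrac{t^2}{2}+\cdots$; this doubling of the leading prefactor coefficient is exactly what will produce the $\sqrt{\pi}\sqrt n$ main term (twice the $\tfrac12\sqrt{\pi}\sqrt n$ of Theorem~\ref{Th3}), consistent with the Remark that strict maxima average half the height.

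Second, I would compute the Mellin transform of $f_2(t)$. Since $\sum_{m\ge1} e^{-mt}/(1-e^{-mt})$ has Mellin transform $\Gamma(s)\zeta(s)^2$, and subtracting the $m=1,2$ terms removes $\Gamma(s)\zeta(s)(1+2^{-s})$, I expect $f_2^*(s)=\Gamma(s)\zeta(s)\bigl(\zeta(s)-1-2^{-s}\bigr)$ for $\mathrm{Re}(s)>1$. The pole structure is the same as before: a double pole at $s=1$ from $\Gamma(s)\zeta(s)^2$ contributing the $(-\log t)/t$ and $1/t$ terms, a simple pole at $s=1$ from the $\Gamma(s)\zeta(s)$ piece, and the pole at $s=0$ giving the constant term. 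Collecting residues via Mellin inversion yields an asymptotic expansion $f_2(t)\sim (-1+\gamma-\log t)/t + c_0 + \cdots$ analogous to~(\ref{series2}), with an altered constant $c_0$ reflecting the missing $m=2$ contribution.

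Third, I would set $g_2(t):=\frac{1-u^2}{u}f_2(t)=(2t+\cdots)f_2(t)$, re-expand in the uniformizing variable $y=\sqrt{1-4z^2}$ using $t=-\log\frac{1-y}{1+y}=2y+\tfrac{2y^3}{3}+\cdots$ exactly as in Section~\ref{as1}, substitute $y=\sqrt{1-4z^2}$, and apply singularity analysis with $x=z^2$ to read off $[x^n]$. Dividing by the Catalan asymptotic~(\ref{numberDyck}) then gives the stated mean $\sqrt{\pi}\sqrt n-\log n+\tfrac12(5-6\gamma)+O(n^{-1/2})$. The doubled prefactor propagates the $\sqrt{\pi}\sqrt n$ leading term and the $-\log n$ (twice the $-\tfrac14\log n$), while the constant $\tfrac12(5-6\gamma)$ should emerge from the combined residue bookkeeping.

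The main obstacle is purely the careful residue and constant-term bookkeeping: tracking the double pole of $\Gamma(s)\zeta(s)^2$ at $s=1$ together with the correction term $-2^{-s}$ through the chain of substitutions $t\mapsto y\mapsto z$, and ensuring every half-integer-order contribution from singularity analysis is combined correctly so that the rational and $\gamma$-dependent constants assemble into precisely $\tfrac12(5-6\gamma)$. No new idea is required beyond Section~\ref{as1}; the risk is entirely in the arithmetic of the lower-order terms, where a single dropped $2^{-s}$ residue or mis-expanded prefactor coefficient would corrupt both the $\log n$ coefficient and the constant.
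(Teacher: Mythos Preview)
Your approach is exactly the paper's: its entire proof is the one line ``we reiterate the approach in Section~\ref{as1},'' and you have spelled out that reiteration correctly in outline.

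There is one concrete slip to fix before the bookkeeping can succeed. When you factor $WTot(u)$ you should get
\[
WTot(u)=\frac{1-u^2}{u^2}\sum_{m\ge 3}\frac{u^m}{1-u^m},
\]
since $\dfrac{u^r}{1-u^{r+2}}=u^{-2}\cdot\dfrac{u^{r+2}}{1-u^{r+2}}$; your prefactor $(1-u^2)/u$ is off by a factor of $u$. With $u=e^{-t}$ the correct prefactor is $e^{2t}-1=2t+2t^2+\cdots$, not $2\sinh t=2t+\tfrac{t^3}{3}+\cdots$. This is not cosmetic: the $-\log n$ term in the mean arises from the $t^2$ coefficient of the prefactor multiplying the $(-\log t)/t$ piece of $f_2(t)$, giving a $-t\log t$ contribution that becomes $\sqrt{1-4z^2}\,\log(1-4z^2)$ after the $t\mapsto y\mapsto z$ substitutions. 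Your $2\sinh t$ has zero $t^2$ coefficient and would lose the $-\log n$ term entirely. With the correct $2t^2$ you get $-2t\log t\to -4y\log y$, which is four times (not ``twice'') the strict-case $-y\log y$, and that is exactly what yields $-\log n$ rather than $-\tfrac12\log n$. The constant $\tfrac12(5-6\gamma)$ likewise depends on this $t^2$ coefficient, so once the prefactor is corrected the rest of your plan goes through.
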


\begin{remark} The asymptotic formula of Theorem~\ref{Th3} when $n=200$ yields 20.536 for the average capacity. Using the exact formula of Theorem~\ref{Th4} divided by the Catalan number for $n=200$ yields 20.368. Taking larger $n$ improves the accuracy.
\end{remark}

\end{document}